\newcommand{\eps}{\varepsilon}
\renewcommand{\le}{\leqslant}
\renewcommand{\ge}{\geqslant}
\newcommand{\aaa}{\mathcal{A}}
\newcommand{\bb}{\mathcal{B}}
\newcommand{\ff}{\mathcal{F}}
\newcommand{\fg}{\mathcal{G}}
\newcommand{\fh}{\mathcal{H}}
\newcommand{\hh}{\mathcal{H}}
\newcommand{\I}{\mathcal{I}}
\newcommand{\E}{\mathsf{E}}
\newtheorem{thm}{Theorem}
\newtheorem{prob}[thm]{Problem}
\newtheorem{cla}[thm]{Claim}
\newtheorem{lemma}[thm]{Lemma}
\newtheorem{cor}[thm]{Corollary}
\newtheorem{prop}[thm]{Proposition}
\title{Trivial colors in colorings of Kneser graphs}
\author{Sergei Kiselev\footnote{Moscow Institute of Physics and Technology, Email: {\tt kiselev.sg@gmail.com}}, Andrey Kupavskii\footnote{G-SCOP, CNRS, University Grenoble-Alpes, France and Moscow Institute of Physics and Technology, Russia; Email: {\tt kupavskii@yandex.ru}.  
}
}
\begin{document}

\maketitle
\begin{abstract}
    We show that any proper coloring of a Kneser graph $KG_{n,k}$ with $n-2k+2$ colors contains a trivial color (i.e., a color consisting of sets that all contain a fixed element), provided $n>(2+\varepsilon)k^2$, where $\varepsilon\to 0$ as $k\to \infty$. This bound is essentially tight. This is a consequence of a more general result on the minimum number of non-trivial colors needed to properly color $KG_{n,k}$.
\end{abstract}

\section{Introduction}
Throughout the paper, we use standard notations $[n]:=\{1,\ldots,n\},$ $[a,b]=\{a,a+1,\ldots,b\}$, $2^{X}$ for the power set of $X$, and ${X\choose k}$ for the collection of all $k$-element subsets of $X.$ Any $\ff\subset 2^X$ we call a {\it family.}

Given positive integers $n\ge 2k$, a {\it Kneser graph} $KG_{n,k}$ is a graph whose vertex set is the collection of all $k$-element subsets of the set $\{1,\ldots, n\}$, with edges connecting pairs of disjoint sets. One of the classical results in combinatorics, conjectured by Kneser \cite{Knes} and proved by  Lov\'asz \cite{Lova}, states that the chromatic number of $KG_{n,k}$ is equal to $n-2k+2$. The proof of Lov\'asz, as well as subsequent proofs given by B\'ar\'any and Green, rely on the Borsuk--Ulam theorem and thus on combinatorial topology. There was a ``combinatorial'' proof given by Matousek and Ziegler, which used Tucker's lemma instead of the Borsuk-Ulam theorem, but it essentially uses the same machinery as the previous proofs. One of the drawbacks of only having a combinatorial topology proof is that the approach is very sensitive to the setting, and many related questions seem to be out of reach of that method. In particular, we do not know, how big is the largest vertex subset of $KG_{n,k}$ that we can properly cover in $n-2k+1$ colors. This motivates the quest of searching for a more extremal-combinatorial approach to this question. One step in this direction is to better understand the structure of proper colorings of $KG_{n,k}$ that use few (minimum possible number of) colors.    

Note that each color in the coloring of $KG_{n,k}$ forms an independent set, which, in turn, is an {\it intersecting family:} i.e., a family of sets in which any two intersect. In this terminology, a proper coloring of $KG_{n,k}$ into $t$ colors is  the same as a partition  of ${[n]\choose k}$ into $t$ intersecting families. We say that an intersecting family is {\it trivial}, or a {\it star}, if all sets in the family contain a fixed element $i$. If this is the case, then we say that $i$ is a {\it center} of $\ff$. More generally, we call two families $\aaa,\bb\subset 2^X$ {\it cross-intersecting,} if $A\cap B\ne \emptyset$ for any $A\in \aaa, B\in \bb$. 

The standard example of a proper coloring of $KG_{n,k}$ consists of $n-2k+1$ star  $\mathcal C_1,\ldots, \mathcal C_{n-2k+1}$, where $\mathcal C_i:=\{A\in {[n]\choose k}: A\subset [i,n], i\in A\}$, and the set ${[n-2k+2,n]\choose k}$.  That is, all but one colors are trivial.  The advantage of always having a trivial color in a coloring of $KG_{n,k}$ is that one can remove the color, remove the corresponding center from the ground set, and thus reduce the study of colorings of $KG_{n,k}$ to that of $KG_{n-1,k}.$   This motivates the following question, asked by the authors in \cite{KK}:

\begin{prob}\label{prob1}
Given $k$, what is the largest number $n = n(k)$, such that there exists a proper coloring of $KG_{n,k}$ into $n-2k+2$ colors without any trivial colors?
\end{prob}

Let us recall two important results from extremal set theory. First, the Erd\H os--Ko--Rado theorem~\cite{EKR}  states that $\alpha(KG_{n,k}) = {n-1\choose k-1},$ provided $n\ge 2k$. The Hilton--Milner theorem~\cite{HM} states that any non-trivial intersecting family of $k$-subsets of $[n]$ has size at most ${n-1\choose k-1}-{n-k-1\choose k-1}+1$, provided $n > 2k$. Let us use the latter to establish a simple bound on $n(k).$

\begin{prop}\label{propintro}
We have $n(k) < k^3$ for $k \ge 3$.
\end{prop}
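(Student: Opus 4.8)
The plan is a one-shot counting argument against the Hilton--Milner bound. Suppose, for contradiction, that for some $n\ge k^3$ with $k\ge 3$ there is a proper coloring of $KG_{n,k}$ into $t:=n-2k+2$ colors having no trivial color. As explained above, such a coloring is exactly a partition of $\binom{[n]}{k}$ into $t$ intersecting families $\ff_1,\dots,\ff_t$, and by hypothesis none of them is a star. The Hilton--Milner theorem then bounds each class: $|\ff_i|\le h:=\binom{n-1}{k-1}-\binom{n-k-1}{k-1}+1$. Summing over all colors,
\[
\binom{n}{k}=\sum_{i=1}^{t}|\ff_i|\le t\,h=(n-2k+2)\left(\binom{n-1}{k-1}-\binom{n-k-1}{k-1}+1\right),
\]
so it suffices to show that this inequality fails whenever $n\ge k^3$ and $k\ge 3$.

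To estimate $h$, I would observe that $\binom{n-1}{k-1}-\binom{n-k-1}{k-1}$ counts precisely the $(k-1)$-subsets of $[n-1]$ that meet the $k$-element set $\{n-k,\dots,n-1\}$; choosing one such element and then the remaining $k-2$ elements among the other $n-2$ gives the crude estimate $\binom{n-1}{k-1}-\binom{n-k-1}{k-1}\le k\binom{n-2}{k-2}$. Using this together with $n-2k+2<n$ and the identity $\binom{n}{k}=\frac{n(n-1)}{k(k-1)}\binom{n-2}{k-2}$, the desired inequality $\binom nk>t\,h$ follows from
\[
\frac{n-1}{k(k-1)}>k+\frac{1}{\binom{n-2}{k-2}}
\]
after dividing through by $n\binom{n-2}{k-2}$.

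It remains to verify this last inequality for $n\ge k^3$, $k\ge 3$. Since $n\ge k^3>k$ we have $n-2\ge k-1$, hence $\binom{n-2}{k-2}\ge\binom{k-1}{k-2}=k-1\ge 2$, so the right-hand side is strictly below $k+1$; on the other hand $n\ge k^3$ gives $\frac{n-1}{k(k-1)}\ge\frac{k^3-1}{k(k-1)}=k+1+\frac1k>k+1$, and we are done. There is no real obstacle in this argument; the only thing to watch is that the bound on $h$ be clean enough for the constant to come out as $k^3$ with a little slack (the same computation in fact tolerates $n$ slightly smaller than $k^3$, which is why the lower-order terms can be discarded for free), and replacing the ratio $\binom{n-k-1}{k-1}/\binom{n-1}{k-1}$ by the one-line bound $\binom{n-1}{k-1}-\binom{n-k-1}{k-1}\le k\binom{n-2}{k-2}$ is the convenient way to arrange this.
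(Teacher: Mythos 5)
Your proof is correct and follows essentially the same route as the paper's: invoke Hilton--Milner to bound each non-trivial color by (roughly) $k\binom{n-2}{k-2}$, compare the sum against $\binom{n}{k}$, and check the resulting numeric inequality at $n=k^3$. The only cosmetic difference is that you carry the ``$+1$'' term separately rather than absorbing it into the crude bound $k\binom{n-2}{k-2}$; the arithmetic checks out.
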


\begin{proof}
The Hilton-Milner theorem states that  any non-trivial intersecting family in ${[n]\choose k}$ has size at most $\binom{n-1}{k-1} - \binom{n-k-1}{k-1} + 1\le k\binom{n-2}{k-2}$. If we have a coloring of $KG_{n,k}$ with non-trivial colors, then we need at least $\binom{n}{k} / \left(k\binom{n-2}{k-2}\right) = \frac{n(n-1)}{k^2(k-1)}$ colors cover all vertices. This is larger than $n-2k+2$ for $n=k^3$ and $k\ge 3$.
\end{proof}
Doing  a bit more careful calculations, one can show that $n(k) \le k^3 - k^2 - 2k + 2$ for $k\ge 3$ using the argument above. However, the $k^3$-barrier for $n$ is not easy to improve.

On the other hand, in \cite{KK} we provided a construction of a coloring with non-trivial colors, which gives $n(k) \ge 2(k-1)^2$ for $k \ge 3$. For completeness, we will reproduce the construction in the next section.

After the first version of the paper appeared, it was pointed out to us that a similar question, but for a very different parameter range, was asked by Katona and partially answered by Sanders~\cite{Sanders}:

\begin{prob}\label{prob2}
Given $n$ and $k$, what is the smallest number $m = m(n, k)$, such that there exists a proper coloring of $KG_{n,k}$ into $m$ colors without any trivial colors?
\end{prob}

Sanders proved the following result. 
\begin{thm}[\cite{Sanders}] \label{thm:Sanders}
There is an absolute constant $C>0$ such that for any fixed $k \ge 3$, 
$$
m(n, k) \ge
\frac{n^2}{2k(k-1)} \left(1 - \frac{Ck^{3.5} e^k}{n}\right),
$$
$$
m(n, k) \le
\frac{n^2}{2k(k-1)} + O(n)
\quad\mbox{when}\quad n > k^2.
$$
\end{thm}
{\bf Remark. } In \cite{Sanders}, the exact form of the lower bound is stated in the proof of Proposition~2, and the subtracted term has the form  $\frac{k^{k+1}(k-2)}{2n(k-1)(k-3)!}$.  
\vskip+0.1cm
The lower bound in this theorem is meaningful only for very large $n$, i.e., for $n = \Omega(k^{3.5} e^k)$, and  this theorem does not give an answer to  Problem~\ref{prob1}. 

The main goal of this paper was to give a satisfactory answer to the question asked in Problem~\ref{prob1}. For the sake of comparison, we also extended our result so that it has the same form as the result of Sanders.   

\begin{thm}\label{thmmain3}
Let $n$, $k$ be sufficiently large integers such that $n < e^{k^{0.1} / 10}$. Put $\eps = \eps(n, k) := \frac{5}{\ln k}$ and $m := \frac{n^2}{2k(k-1)}\big(1-\frac{\eps k^2}n\big)$. Then $\binom{[n]}{k}$ cannot be covered by $m$ intersecting non-trivial families. In particular, $m(n, k) > \frac{n^2}{2k(k-1)}\big(1-\frac{\eps k^2}n\big)$ for all sufficiently large $n$ and $k$.
\end{thm}
The restriction $n < e^{k^{0.1} / 10}$ seems to be a technical artifact of the proof. It can be easily improved to $n<e^{k^{0.49}}$, and it is likely there is a way to get rid of it completely. We decided to avoid cramming the text with additional details needed to get rid of this condition, since we think that the case of $n$ that is comparable to $k$ is the most interesting one.

\begin{cor}
Let $n, k$ be sufficiently large integers, and $\eps = \eps(k) := \frac{5}{\ln k}$ be such that $n \ge (2 + \eps)k^2$. Then $\binom{[n]}{k}$ cannot be covered by $n$ intersecting non-trivial families. In particular, $n(k)<(2 + \eps)k^2 $ for all sufficiently large $k$.
\end{cor}

Note that the theorem is stated in terms of covering ${[n]\choose k}$ by intersecting families. This is slightly stronger than the same statement for partitions of ${[n]\choose k}$  (i.e., colorings of $KG_{n,k}$) since any partition is a covering as well.

The methods that we use bear some superficial resemblance to those of Sanders. Similarly to \cite{Sanders}, we construct some graph from a given coloring and work with independent $k$-sets of that graph. However, our way to construct the graph and work with its independent $k$-sets is different and more subtle. In the core of the proof, we combine a graph-theoretic result of Khad\v ziivanov and Nikiforov \cite{KhN} with a certain subtle decomposition of intersecting families. This approach may be useful for other problems related to intersecting families.

We note that our results are in line with the results that deal with the following problem of Erd\H os: for given $n,k,t$, what is the largest size of a family $\ff\subset {[n]\choose k}$ such that $\ff$ is a union of at most $t$ intersecting families? Note that the result that $\chi(KG_{n,k})\ge n-2k+2$ in this language states that $|\ff|<{n\choose k}$ if $t<n-2k+2$. The natural conjecture here is that, for most triples of parameters, the extremal example is a union of $t$ stars. This was shown for $t=2$ and $n>ck$ with $c\approx 2.62$ by Frankl and F\"uredi \cite{FF} and for any constant $t$ and $n>2k +C_t k^{2/3}$ by Ellis and Lifshitz \cite{EL}. Frankl and F\"uredi also provided examples when this is not true. Our result does not imply anything for this problem directly, but the proof technique allows for obtaining results in a very different regime when $n$ is quadratic in $k$, but $t$ is allowed to be large (even close to $\chi(KG_{n,k})$. This will be investigated in a subsequent paper. We also note that these questions are related to the Erd\H os Matching Conjecture (cf. \cite{FK} for an up-to-date account of the problem and related questions).

The rest of the paper is structured as follows. In the next section, we shall present the lower bound construction from \cite{KK}. In Section~\ref{sec3}, we will prove a slightly weaker upper bound on $n(k)$. In Section~\ref{sec4}, we will prove Theorem~\ref{thmmain3}. 

\section{Lower bound construction}\label{sec2}
\begin{prop}[\cite{KK}]
  We have $n(k)\ge 2(k-1)^2$ for $k\ge 3$.
\end{prop}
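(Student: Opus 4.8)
\noindent\emph{Proof plan.} Let $k\ge 3$ and set $n=2(k-1)^2$, so that $n-2k+2=2(k-1)(k-2)$. We must produce a proper coloring of $KG_{n,k}$ with $n-2k+2$ colors, none of which is a star; equivalently, a partition of $\binom{[n]}{k}$ into $n-2k+2$ non-trivial intersecting families. The first remark is that the number of colors takes care of itself: by the Lov\'asz theorem $\chi(KG_{n,k})=n-2k+2$, so \emph{any} proper coloring in which every class is a non-trivial intersecting family automatically has exactly $n-2k+2$ non-empty classes. Hence it suffices to cover $\binom{[n]}{k}$ by at most $n-2k+2$ pairwise disjoint non-trivial intersecting families.

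The construction I would use rests on a block decomposition. Partition $[n]$ into $k-1$ blocks $V_1,\dots,V_{k-1}$ of size $2(k-1)$. Since $2(k-1)>k-1$, the pigeonhole principle gives, for every $A\in\binom{[n]}{k}$, some block with $|A\cap V_i|\ge 2$; \emph{charge} $A$ to one such block. Each block $V_i$ will account for $2k-4$ of the color classes, and this matches both arithmetic constraints: $(k-1)(2k-4)=n-2k+2$, and $2k-4=2(k-1)-2=\chi\!\big(KG_{2(k-1),2}\big)$, the latter being the quantity that governs how one colors the sets charged to a single block. Two of the three requirements are then easy. \emph{Coverage} is immediate since every $k$-set is charged somewhere. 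The \emph{intersecting} property of a class follows as long as, for each $A$ charged to $V_i$, one records a small subset of $A\cap V_i$ whose image under a fixed auxiliary proper coloring pins down the class: two sets in a common class then meet already inside $V_i$. A further useful fact, which supplies the only slack in the whole argument, is that the $k$-subsets contained in a single block form an intersecting family on their own, by the majority bound $2k>2(k-1)$.

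The heart of the matter — and the step I expect to be the main obstacle — is arranging that every one of the $2k-4$ classes of each block is non-trivial. The naive scheme (color $A$ charged to $V_i$ by applying a $\chi$-coloring of $KG_{2(k-1),2}$ to a two-element subset of $A\cap V_i$) does not suffice: an intersecting family of $2$-sets is a star or a triangle, and for $k\ge 4$ an optimal coloring of $KG_{2(k-1),2}$ must contain star-like classes, whose color classes consist of sets all through a common point. So within each block one needs a more careful rule — charging to larger substructures of $A\cap V_i$, distributing the block-internal $k$-sets among the classes so as to destroy each potential common point, and routing with care the ``boundary'' sets (those meeting the charged block in exactly two points, or eligible for several blocks). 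Because the coloring sits exactly at the chromatic number, there is no room to spare: a single mis-routed $k$-set can turn a class into a star, and verifying that all $n-2k+2$ classes can be made non-trivial simultaneously — the bookkeeping that closes the construction — is where the real work lies.
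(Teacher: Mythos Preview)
Your framework is exactly the one the paper uses: partition $[n]$ into $k-1$ blocks of size $2k-2$, use pigeonhole to charge every $k$-set to a block it meets in at least two points, and produce $2k-4$ non-trivial intersecting families per block, for a total of $(k-1)(2k-4)=n-2k+2$. So the architecture is right, and you have correctly located where the content lies.

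But the proposal is not a proof: you stop precisely at the step that carries all the weight. You note that routing via a $2$-coloring of $KG_{2(k-1),2}$ fails and say a ``more careful rule'' is needed, yet you do not supply one. This is not mere bookkeeping that can be waved away; an explicit construction of the $2k-4$ families inside each block is required, together with a verification that they cover every set meeting the block in $\ge 2$ elements and that none of them is a star. Without it the argument is incomplete.

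The paper's device is to use Hilton--Milner-type families. Inside the block $[2k-2]$ it sets aside a triple $T=\{2k-4,2k-3,2k-2\}$ and, for $i=1,\dots,2k-5$, defines $F_i=T\cup\{i+1,\dots,i+k-3\}$ (second part taken modulo $2k-5$) and
\[
\mathcal H_i=\{F\in\tbinom{[n]}{k}: i\in F,\ F\cap F_i\ne\emptyset\}\cup\{F_i\},\qquad
\mathcal G=\{F\in\tbinom{[n]}{k}: |F\cap T|\ge 2\}.
\]
That is $2k-4$ families. Each $\mathcal H_i$ is non-trivial because it contains $F_i\not\ni i$, and $\mathcal G$ is visibly non-trivial. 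A short case check (depending on how a pair $\{i,j\}\subset F\cap[2k-2]$ sits relative to $T$ and the cyclic intervals) shows that any $F$ with $|F\cap[2k-2]|\ge 2$ lands in one of these families. This explicit list is what your plan is missing.
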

 \begin{proof}Put $n:=2(k-1)^2$ and split the ground set $[n]$ into $k-1$ blocks $A_i$ of size $2k-2$. For each block, say, $A_1:=[2k-2]$, consider the following covering by intersecting families: for $i=1,\ldots, 2k-5$, define
$$F_i:=\{2k-4,2k-3,2k-2\}\cup \{i+1,\ldots, i+k-3\},$$
where the addition and subtraction in the second part of the set is modulo $2k-5$ (and thus the elements belong to $[2k-5]$). Consider the intersecting Hilton-Milner-type families of the form
$$\mathcal H_i:=\Big\{F\in {[n]\choose k}: i\in F, F\cap F_{i}\ne\emptyset\Big\}\cup \{F_{i}\}.$$
Complement it with the intersecting family
$$\mathcal G:=\Big\{F\in {[n]\choose k}: |F\cap \{2k-4,2k-3,2k-2\}|\ge 2\Big\}.$$
If a set $G\cap [2k-2]\supset \{i,j\}$ for $i<j$, then $G$ is contained in one of the families $\mathcal G$, $\mathcal H_l,$ $l\in [2k-5]$. Indeed, \begin{itemize}
\item if $\{i,j\}\subset \{2k-4,2k-3,2k-2\}$, then $G\subset \mathcal G$;
\item if $i<2k-4\le j$, then $G\subset \mathcal H_i$;
\item if $j<2k-4$ and $j\le i+k-3$, then $G\subset \mathcal H_i$; 
\item if $i+k-2\le j<2k-4$, then $j+k-3$ mod $2k-5$ is at least $i$, and $G$ is contained in $\mathcal H_j$.
\end{itemize}
Therefore, any set intersecting $A_1$ in at least $2$ elements is contained in one of the intersecting families given above. On the other hand, any $k$-set must intersect one of the $k-1$ blocks in at least $2$ elements. Thus, considering similar collections of intersecting families in the other blocks, we get that the whole of ${[n]\choose k}$ is covered.

We have $2k-4$ intersecting families on each block, which gives $(2k-4)(k-1)$ families in total. On the other hand, $\chi(KG_{n,k}) = 2(k-1)^2-2k+2 = 2(k-2)(k-1)$, that is, the number of intersecting families we used equals  the chromatic number of the graph. It is also clear that none of the families is a star, and we can easily preserve this property when making a coloring (rather than a covering).
\end{proof}

We note that most of the families in the coloring presented above are Hilton--Milner type families.

\section{A weaker upper bound}\label{sec3}
We say that $C\subset X$ is a {\it cover} of a family $\ff\subset 2^X$ if $C\cap A\ne \emptyset$ for any $A\in \ff$. Let $\tau(\ff)$ stand for the size of the smallest cover of $\ff.$ Note that saying that $\tau(\ff)\ge 2$ is the same as saying that $\ff$ is non-trivial. 
In this section we are going to prove the following theorem.
\begin{thm}\label{thmmain1}
Consider $m$ intersecting families $\ff_1, \ldots, \ff_m \subset \binom{[n]}{k}$ with $\tau(\ff_i) \ge 2$ and such that $\ff_1\cup\ldots\cup\ff_m = \binom{[n]}{k}$. Then $m > n^2 / (8k^2)$.
\end{thm}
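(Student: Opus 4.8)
The plan is to exploit the fact that each $\ff_i$ has $\tau(\ff_i)\ge 2$ via the Hilton--Milner bound, but in a more refined, "localized" way than in Proposition~\ref{propintro}. The crude argument there loses because it uses the worst-case Hilton--Milner bound $k\binom{n-2}{k-2}$ for \emph{every} color, whereas a typical $k$-set is covered many times, and most intersecting families that actually appear must be ``spread out'' and hence small. Concretely, I would fix a small integer parameter $s$ and, for each family $\ff_i$, pick a minimal cover $C_i$; since $\tau(\ff_i)\ge 2$, we have $|C_i|\ge 2$, so we may choose two distinct elements $a_i,b_i\in C_i$. The point is that every set in $\ff_i$ meets $\{a_i,b_i\}$ together with the \emph{rest} of $C_i$, but more usefully: an intersecting family with a cover of size $\tau$ containing a fixed pair is governed by a Hilton--Milner/Frankl-type estimate. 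I expect the cleanest route is through the following dichotomy applied to each color.

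First I would classify the families. Call $\ff_i$ \emph{small} if $|\ff_i|\le \binom{n-2}{k-2}$ (roughly the ``double star'' count), and \emph{large} otherwise. For a large family, $\tau(\ff_i)\ge 2$ forces $\ff_i$ to be, up to the Hilton--Milner theorem, close to a Hilton--Milner family: there is a pair $\{x_i,y_i\}$ and a ``kernel'' set $K_i$ of size $k$ such that $\ff_i\subseteq \fh_{x_i}\cup\fh_{y_i}\cup\{\text{sets meeting }K_i\}$ in a controlled way — the key quantitative input being that a non-trivial intersecting family has size at most $\binom{n-1}{k-1}-\binom{n-k-1}{k-1}+1$. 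Then I would count incidences: sum $\sum_i |\ff_i|$ from below by $\binom{n}{k}$ (covering), and bound each $|\ff_i|$ from above. The small families contribute at most $m\binom{n-2}{k-2}$. For the large families, the crucial observation is that each such family, being essentially supported on a pair $\{x_i,y_i\}$, can be charged to that pair; and I claim that for a \emph{fixed} pair $\{x,y\}$, not too many families can simultaneously look like Hilton--Milner families based at $\{x,y\}$, because their ``exceptional'' sets (those in $\ff_i$ avoiding one of $x,y$) must be internally disjoint-ish across the families covering the same base. Balancing these contributions against $\binom{n}{k}$ and using $n\ge 2k$ should yield $m>n^2/(8k^2)$.

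Alternatively — and this may in fact be the intended cleaner argument — I would localize on the ground set directly: for a uniformly random element $j\in[n]$, consider the families $\ff_i$ restricted to sets \emph{not} containing $j$; since each $\ff_i$ is non-trivial it is not a star at $j$, so after deleting $j$ each surviving trace is still a genuine intersecting family on $[n]\setminus\{j\}$, and one repeats a Hilton--Milner-type estimate there, iterating or averaging over $j$ to gain a factor $\sim n/k$ over the trivial bound. Either way the engine is: non-triviality $\Rightarrow$ Hilton--Milner smallness, applied not once globally but $\Theta(n/k)$ times in a localized/averaged fashion, turning the exponent-$3$ bound into an exponent-$2$ bound with an explicit constant $1/8$.

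The main obstacle I anticipate is the bookkeeping for the \emph{large} families — controlling how many near-Hilton--Milner families can share the same base pair $\{x,y\}$ and ensuring their exceptional parts (the sets meeting the kernel but not the whole base) do not overlap too wildly, since a given $k$-set may legitimately lie in several colors in a covering (as opposed to a partition). Making the stability/uniqueness of the Hilton--Milner extremal configuration quantitative enough to push through a covering (not just partition) argument, while keeping the constant as good as $1/8$, is where the real work lies; the small-family and averaging parts are comparatively routine.
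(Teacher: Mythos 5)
Your proposal takes a genuinely different route from the paper, but it is not a proof: the step you yourself flag as ``where the real work lies'' is exactly where the plan breaks down, and you give no mechanism to carry it through. The central difficulty is that you are dealing with a \emph{covering}, not a partition, so there is no disjointness to exploit when you try to charge several near-Hilton--Milner families to the same base pair $\{x,y\}$: two different colors $\ff_i,\ff_j$ based at the same pair can have \emph{identical} exceptional parts (the $k$-sets meeting the kernel but avoiding the base), and nothing in the hypotheses prevents this. Without some replacement for disjointness you only ever recover the crude per-family bound $|\ff_i|\le k\binom{n-2}{k-2}$, which gives $m\gtrsim n^2/k^3$ (the exponent-$3$ bound of Proposition~\ref{propintro}), not $n^2/k^2$. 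Your second suggestion --- averaging over a random deleted element $j$ --- has a similar unaddressed gap: after deleting $j$, the trace of a non-trivial $\ff_i$ on $[n]\setminus\{j\}$ may well become trivial (this happens whenever $j$ lies in every minimal cover of $\ff_i$), so there is no obvious way to iterate the Hilton--Milner estimate.

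The paper's actual argument does not use Hilton--Milner stability at all. Instead it converts each color $\ff_i$ into a small ``set-cover'' $\fh_i$ consisting of $\tau_i$-element sets (Lemma~\ref{covers_lemma}: an intersecting family with covering number $\tau$ is set-covered by at most $\tau k^{\tau-1}$ sets of size $\tau$, and $\tau_i\ge 2$ is exactly where non-triviality enters), observes that the union $\bigcup_i\fh_i$ is a hypergraph with no independent $k$-set, and then applies Spencer's Tur\'an-type lower bound (Theorem~\ref{spencer}) with $p=2k/n$ to that hypergraph. This sidesteps the overlap problem entirely: one never needs to control how the colors relate to each other, only how many small ``transversal'' sets each one forces, and Spencer's inequality does all the counting. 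If you want to salvage a Hilton--Milner-flavoured argument you would need a quantitative stability statement strong enough to survive arbitrary overlaps among colors, which is a substantially harder task than what the set-cover-plus-Spencer route demands.
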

A simple numerical corollary is as follows.

\begin{cor}
If $n\ge 8k^2$, $k\ge 2$ and $KG_{n,k}$ is covered by $n-2k+2$ intersecting families, then one of these families is trivial. In short, $n(k)<8k^2.$
\end{cor}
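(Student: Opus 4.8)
The plan is to bootstrap the Hilton--Milner theorem using the fact that the $\ff_i$ \emph{jointly} cover $\binom{[n]}{k}$. Since each $\ff_i$ is non-trivial ($\tau(\ff_i)\ge 2$), Hilton--Milner gives $|\ff_i|\le\binom{n-1}{k-1}-\binom{n-k-1}{k-1}+1\le k\binom{n-2}{k-2}$, so the naive union bound $\binom nk\le\sum_i|\ff_i|$ only yields $m\gtrsim n^2/k^3$ — short of the target by a factor $\Theta(k)$ (this is exactly the computation behind Proposition~\ref{propintro}). The point to exploit is that a family close to the Hilton--Milner bound is essentially a star, whereas a star of a single element $x$ meets only a $\Theta(k^2/n)$-fraction of $\mathrm{St}(x):=\{F\in\binom{[n]}k:x\in F\}$; such families are very wasteful as covers, so there cannot be too many of them.

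Concretely, I would fix a threshold $T=C\binom{n-2}{k-2}$ for a suitable absolute constant $C$ and call $\ff_i$ \emph{large} if $|\ff_i|>T$ and \emph{small} otherwise. First I would prove a Hilton--Milner-type stability statement: every large $\ff_i$ has an element $x_i$ with $|\ff_i\setminus\mathrm{St}(x_i)|\le C'\binom{n-2}{k-2}$ for an absolute constant $C'$ — i.e.\ $\ff_i$ is a sub-star of $x_i$ plus a bounded ``remainder''. (For $\tau(\ff_i)=2$ this comes from fixing a minimal $2$-cover $\{a,b\}$, decomposing $\ff_i$ into the sets through $a$, the sets through $b$ but not $a$, and the sets through both, and applying a cross-intersecting Hilton--Milner estimate to the first two, which are cross-intersecting $(k-1)$-uniform families on $n-2$ points; for $\tau(\ff_i)\ge 3$ one checks $|\ff_i|\le T$ once $C$ is large, so that case is vacuous.) Setting $X=\{x_i:\ff_i\text{ large}\}$, any $F\in\binom{[n]}k$ disjoint from $X$ lies only in a small family or in the remainder of a large family, so
\[
\binom{n-|X|}{k}\ \le\ m\cdot\max(C,C')\,\binom{n-2}{k-2}.
\]
Because $\binom nk/\binom{n-2}{k-2}=\tfrac{n(n-1)}{k(k-1)}\sim n^2/k^2$, if $|X|=o(n)$ then $\binom{n-|X|}{k}=(1-o(1))\binom nk$ and this forces $m\gtrsim n^2/k^2$; a careful choice of $C$ and tracking of the lower-order terms is what would pin the constant down to $\tfrac18$.

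The crux — and what I expect to be the main obstacle — is controlling $|X|$, i.e.\ bounding the number of large families. Here one must use the ``stars are inefficient'' principle quantitatively: for each $x\in X$, in the covering inequality $\binom{n-1}{k-1}=|\mathrm{St}(x)|\le\sum_i\deg_{\ff_i}(x)$, a large family centred at $x$ contributes at most $k\binom{n-2}{k-2}$, a large family centred elsewhere contributes at most $(1+C')\binom{n-2}{k-2}$ (its sets through $x_j$ and $x$, plus its remainder), and a small family contributes at most $T$; summing this over $x\in X$ and using $\sum_x\deg_{\ff_i}(x)=k|\ff_i|$ bounds $|X|$ in terms of $m$. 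The honest difficulty is that a single pass of this argument seems to recover only bounds of the shape $m\gtrsim n/k$ or $m\gtrsim n^2/k^3$ in the ``many large families'' regime, so one has to do something genuinely more careful: either iterate (the large families are themselves non-trivial intersecting families of the restricted almost-star form, so rerun the whole argument on them and sum a geometric series), or set everything up as an induction on $n$ in which one deletes a point lying in as few minimal $2$-covers as possible (each $\tau=2$ family has only $O(k)$ points in its minimal $2$-covers, so a good point exists) together with the centres of the families that become trivial. Making the bookkeeping of this peeling/induction close with the precise constant $\tfrac18$ is the delicate part of the argument.
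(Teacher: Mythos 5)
Your proposal has a genuine gap, and you flagged it honestly yourself: the bound on $|X|$ (the set of centres of the ``large'' families) is never established. You observe that one degree-counting pass over $x\in X$ recovers only $m\gtrsim n/k$ or $m\gtrsim n^2/k^3$, and then propose either iterating the stability argument or inducting on $n$, but neither route is carried out, and it is not at all clear that either one terminates. The Hilton--Milner stability statement you invoke for large families is also left unproved (you would need an explicit cross-intersecting bound with a concrete $C'$), and the constant $1/8$ in the statement is never reached --- your sketch gives at best an asymptotic $m\gtrsim n^2/k^2$ with unspecified constants. As written, this is a reasonable plan but not a proof.

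The paper's route is entirely different and sidesteps this obstruction. The corollary is just the numerical consequence of Theorem~\ref{thmmain1}, stated and proved directly above it: any covering of $\binom{[n]}{k}$ by $m$ non-trivial intersecting families has $m>n^2/(8k^2)$, and for $n\ge 8k^2$ this gives $m>n\ge n-2k+2$, so $n-2k+2$ non-trivial families cannot suffice. The proof of Theorem~\ref{thmmain1} makes no use of Hilton--Milner or of any large/small dichotomy. Instead, Lemma~\ref{covers_lemma} shows that a non-trivial intersecting family with covering number $\tau$ can be set-covered by at most $\tau k^{\tau-1}$ sets of size $\tau$; the union of these set-covers over all $\ff_i$ is a hypergraph $\fh$ with no independent $k$-set (because the $\ff_i$ cover every $k$-set), and Spencer's random-sampling Tur\'an bound (Theorem~\ref{spencer}) applied to $\fh$ with $p=2k/n$, together with the elementary inequality $\tau a^{\tau-1}\le 2a$ valid for $\tau\ge 2$ and $a<1/\sqrt{e}$, gives $m>n^2/(8k^2)$ in a few lines. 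The reason this avoids your difficulty is that the set-cover of a family with $\tau=2$ contributes at most $2k$ edges of uniformity $2$, and larger $\tau$ contributes only $\tau k^{\tau-1}$ edges of uniformity $\tau$, so the weighted edge count in Spencer's inequality is controlled uniformly over all $\tau$ simultaneously; there is no need to count, locate, or peel off ``nearly trivial'' families at all. If you want to push your Hilton--Milner stability approach through, you would need a quantitative stability theorem plus a peeling or induction that actually closes, which is likely to be substantially more delicate than the Spencer route.
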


In the proof of Theorem~\ref{thmmain1}, we are going to use the following elegant result due to Spencer. We reproduce its proof for completeness. Note that $\E$ stand for the expectation, and for a family $\hh$ and a set $A$, we denote by $\hh[A]$ the restriction of $\hh$ on $A$, that is, $\hh[A] = \{X\in \hh: X\subset A\}$. Given a family $\ff\subset 2^X$, a subset $I\subset X$ is an {\it independent set} in $\ff$ if no set from $\ff$ is entirely contained in $I$. 
\begin{thm}[\cite{S}]
Consider a family $\fh\subset 2^{[n]}$ containing no independent set of size $b$ and let~$\hh^{(k)}$, $\hh^{(k)}\subset \hh,$ be the family of all $k$-sets of $\hh$, $1\le k \le n$. Then for any $0 < p < 1$ the following holds:
$$
\sum_{i=2}^n |\fh^{(i)}|p^i > np - b.
$$
\label{spencer}
\end{thm}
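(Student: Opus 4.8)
The plan is to reproduce Spencer's probabilistic deletion argument. First I would point out that the hypothesis should be read as including that every member of $\hh$ has size at least $2$ (the left-hand side only sees such sets, and for a family containing a singleton the inequality can fail); this is what we will have in the application. Now sample a random subset $R\subseteq[n]$ by putting each element of $[n]$ into $R$ independently with probability $p$. Then $\E|R|=np$, and, writing $X$ for the number of members of $\hh$ entirely contained in $R$, linearity of expectation together with $\PP[M\subseteq R]=p^{|M|}$ gives $\E X=\sum_{i=2}^{n}|\hh^{(i)}|p^{i}$.

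The next step is to extract from $R$ an independent set by deletions. Go through the members of $\hh$ contained in $R$ one at a time; each time the current set still fully contains the member under consideration, delete one arbitrary element of it. Since elements are only ever removed, a member that has already been destroyed stays destroyed, so at the end no member of $\hh$ lies inside the surviving set $I\subseteq R$; thus $I$ is independent in $\hh$. At most one element was deleted for each of the at most $X$ members that were processed, so $|I|\ge |R|-X$. Because $\hh$ has no independent set of size $b$, every independent set has size at most $b-1$, so $|I|\le b-1$ for every outcome, whence the deterministic inequality $|R|-X\le b-1$.

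Taking expectations in $|R|-X\le b-1$ yields $np-\sum_{i=2}^{n}|\hh^{(i)}|p^{i}\le b-1$, that is, $\sum_{i=2}^{n}|\hh^{(i)}|p^{i}\ge np-b+1>np-b$, as claimed.

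I do not expect a genuine obstacle here: the argument is short and the main content is conceptual rather than computational. The two points deserving care are (i) checking that the greedy deletion really produces an independent set while removing at most $X$ elements (deletions may coincide, which only helps the bound), and (ii) tracking the extra ``$+1$'' of slack, which comes from the fact that forbidding independent sets of size $b$ forces independent sets to have size at most $b-1$; this is precisely what makes the final inequality strict.
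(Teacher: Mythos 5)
Your proof is correct and follows essentially the same probabilistic deletion argument as the paper: sample a $p$-random subset, delete one element for each contained member of $\hh$ to obtain an independent set of size at most $b-1$, and take expectations. The small extra care you take (noting that $\hh$ is implicitly assumed to have no sets of size $\le 1$, and that strictness comes from the integer bound $|I|\le b-1$) matches what the paper's proof uses implicitly.
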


\begin{proof}
Take a random subset $A$ of $[n]$, including each element independently with probability $p$. Then, for each set $X \in \hh[A],$ remove an arbitrary element $v\in X$ from $A$. The resulting set $A'$ is clearly independent, therefore $|A| - |\fh[A]| \le |A'| < b$. Since this holds for any $A$, we get that the same holds on average: $$\E |A| - \E |\fh[A]| < b.$$
The statement of the theorem follows from the inequality above by substituting the values of the expectations: $\E |A| = np$ and $\E |\fh[A]| = \sum_{i=2}^n |\fh^{(i)}|p^i$.
\end{proof}

Note that the bound in Theorem~\ref{spencer} is not sharp, e.g. for a 2-graph it states that the number of edges is at least $n^2/(4b)$, while Tur\'an's theorem gives approximately $n^2/(2b)$ edges. This is a potential direction for improvement, which, unfortunately, does not give results as sharp as Theorem~\ref{thmmain3}.

We say that a family $\fh$ \emph{set-covers} a family $\ff$ if for each set $F\in\ff$ there is a set $H\in\fh$ such that $H \subset F$. Note that any independent set in $\fh$ is also independent in $\ff$.

\begin{lemma}
Let $\ff \subset \binom{[n]}{k}$ be an intersecting family with $\tau(\ff) = \tau$. Then it can be set-covered by a family of $\tau$-sets of size $\tau k^{\tau-1}$.
\label{covers_lemma}
\end{lemma}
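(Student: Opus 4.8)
The plan is to build the covering family of $\tau$-sets recursively, using the fact that $\tau(\ff)=\tau$ forces every set in $\ff$ to meet every minimum cover, and more importantly that after deleting the sets through one element we drop the covering number by exactly one. Fix a minimum cover $C=\{x_1,\dots,x_\tau\}$ of $\ff$, so every $F\in\ff$ contains some $x_j$. Partition $\ff$ (not necessarily disjointly) as $\ff=\bigcup_{j=1}^\tau \ff_{x_j}$, where $\ff_{x_j}=\{F\in\ff: x_j\in F\}$. For each $j$, consider the ``link'' $\ff_{x_j}-x_j := \{F\setminus\{x_j\}: F\in\ff_{x_j}\}\subset\binom{[n]\setminus\{x_j\}}{k-1}$. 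The key observation is that $\tau(\ff_{x_j}-x_j)\le \tau-1$: indeed $C\setminus\{x_j\}$ need not be a cover of the link, but any minimum cover of $\ff$ has size $\tau$ and one checks that $\ff_{x_j}-x_j$ cannot have covering number $\tau$ or more — otherwise, prepending $x_j$ to a cover of the link... \emph{careful}: the clean statement is just that the link is a $(k-1)$-uniform family whose covering number is at most $\tau-1$ (this is a standard fact: if $\tau(\ff)=\tau$ and $x\in[n]$, then $\tau(\ff_x-x)\le\tau-1$, since a cover of the link together with $\{x\}$ covers $\ff_x$, and also covers $\ff\setminus\ff_x$? no). The correct and robust route is: the link $\ff_{x_j}-x_j$ has covering number $\le\tau-1$ because if it had a cover of size $\le \tau-1$... hmm, in fact $\ff_{x_j}-x_j$ might be empty or have small $\tau$; what we really need is an \emph{upper} bound $\tau(\ff_{x_j}-x_j)\le\tau-1$, which fails in general, so instead I will not pass to the link but argue directly by induction on $\tau$ with the uniformity decreasing.

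Concretely, I would prove by induction on $\tau$ the statement: any intersecting family $\ff$ with $\tau(\ff)=\tau$ whose sets have size at most $k$ can be set-covered by at most $\tau k^{\tau-1}$ sets of size at most $\tau$. Base case $\tau=1$: $\ff$ is a star with center $x$, and the single set $\{x\}$ set-covers it; $1\cdot k^0=1$. Inductive step: take a minimum cover $\{x_1,\dots,x_\tau\}$. For $F\in\ff$, pick the least $j$ with $x_j\in F$ and put $F$ in the class $\ff_j$; then $\{x_j\}$ ``handles'' one coordinate of $F$. Within class $\ff_j$, every set $F$ contains $x_j$, so write $F=\{x_j\}\cup F'$ with $|F'|\le k-1$; the family $\fg_j=\{F': F\in\ff_j\}$ is a family of sets of size $\le k-1$. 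If $\fg_j$ is intersecting with $\tau(\fg_j)\le \tau-1$ we are done by induction (size $\le (\tau-1)k^{\tau-2}$ sets of size $\le\tau-1$, and we glue $x_j$ back to each, getting sets of size $\le\tau$); summing over $j$ gives $\tau(\tau-1)k^{\tau-2}\le \tau k^{\tau-1}$. The issue is that $\fg_j$ need not be intersecting and need not have small covering number. The standard fix, which I expect to be the crux: if some element $y\neq x_j$ lies in \emph{every} $F'\in\fg_j$, recurse on that; otherwise, for each of the $\le k-1$ elements $y\in F'$ of a fixed $F_0'\in\fg_j$, the subfamily of $\fg_j$ of sets containing $y$ can be set-covered recursively at ``level $\tau-1$'' (covering number drops because we have fixed a new element), and every $F'\in\fg_j$ meets $F_0'$... but $\fg_j$ need not be intersecting.

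So the honest approach is the branching/recursion on the cover itself, which is the classical argument: define, for a family $\ff$ of sets of size $\le k$ with $\tau(\ff)\le\tau$, a set-cover as follows. Pick any minimal cover $C$, $|C|\le\tau$. For each $x\in C$, the family $\ff_x=\{F\setminus\{x\}:x\in F\in\ff\}$ consists of sets of size $\le k-1$ and has $\tau(\ff_x)\le\tau-1$ --- \emph{this is the one nontrivial point and it is true for a minimal cover}: if $D$ covers $\ff_x$ with $|D|<\tau-1$? not quite, but $\tau(\ff_x)\le\tau-1$ does hold because $(C\setminus\{x\})$ covers all $F\setminus\{x\}$ with $F\in\ff$, $x\in F$, $|F\cap C|\ge 2$, while sets $F$ with $F\cap C=\{x\}$ give $F\setminus\{x\}$ which must still be covered --- here is where we use minimality of $C$ to bound things, and in the worst case we get $\tau(\ff_x)\le\tau-1$ by a short exchange argument. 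Granting this, by induction $\ff_x$ is set-covered by $\le(\tau-1)k^{\tau-2}$ sets of size $\le\tau-1$; gluing $x$ back in gives sets of size $\le\tau$ set-covering $\ff_x':=\{F\in\ff:x\in F\}$. Ranging over all $x\in C$ (at most $\tau$ of them) covers all of $\ff$ since $C$ is a cover, with total count $\le\tau(\tau-1)k^{\tau-2}\le\tau k^{\tau-1}$, completing the induction; intersectingness of $\ff$ is only used to guarantee $\tau(\ff)\ge1$ and is otherwise not needed beyond the hypothesis $\tau(\ff)=\tau$. I expect the main obstacle to be cleanly verifying $\tau(\ff_x)\le\tau-1$ for a \emph{minimal} (inclusion-minimal) cover $C$ — the quick proof being: given a cover $D$ of $\ff_x$, the set $D\cup\{x\}$ covers $\ff$, so $|D|+1\ge\tau$, i.e. $\tau(\ff_x)\ge\tau-1$, which is the wrong direction; the right direction $\tau(\ff_x)\le\tau-1$ instead follows because $C\setminus\{x\}$ covers every $F\setminus\{x\}$ except possibly those $F$ with $F\cap C=\{x\}$, and by minimality of $C$ there is for each $y\in C$ a set $F_y\in\ff$ with $F_y\cap C=\{y\}$; handling the residual sets requires adding at most... — so in writing the final proof I would instead phrase the recursion to branch on \emph{elements of a fixed set} $F_0\in\ff$ rather than on a cover, using $\tau\le k$ and the intersecting property to see $F_0$ itself is a cover of size $\le k$, then at each level pick a minimum cover and split — and this standard ``$\tau$-cover'' construction (the same one appearing in proofs bounding the number of sets in a $\tau$-critical intersecting family) yields exactly the bound $\tau k^{\tau-1}$.
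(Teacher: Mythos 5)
There is a genuine gap here: the proposal never settles on a correct argument, and the inductive claim it keeps returning to is false. You want to take a minimum cover $C$ of $\ff$, pass to the link $\fg_x := \{F\setminus\{x\} : x\in F\in\ff\}$ at each $x\in C$, and recurse using $\tau(\fg_x)\le \tau-1$. But $\tau(\fg_x)$ need not drop. Concretely, let $\ff\subset\binom{[6]}{3}$ consist of all $3$-sets containing at least two of $\{1,2,3\}$. This is intersecting with $\tau(\ff)=2$ (cover $\{1,2\}$). The link at $1$ is $\{\{a,b\}\subset[2,6] : \{a,b\}\cap\{2,3\}\ne\emptyset\}$, which contains the disjoint sets $\{2,4\}$ and $\{3,5\}$, so its covering number is still $2$, not $1$. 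You yourself flag this as ``the one nontrivial point,'' try several patches (minimality of $C$, exchange arguments, branching on $F_0$), and acknowledge each fails or is unclear; the proposal never converges to a proof. The side issue you also notice --- that the link need not be intersecting, so the induction hypothesis doesn't even apply --- is a second, independent obstruction.

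The missing idea, and what the paper actually does, is to never pass to a link at all. Instead one grows partial covers directly: maintain a family $\fh_\ell$ of $\ell$-sets set-covering $\ff$, starting from $\fh_1=\{\{x\}: x\in C\}$. For each $G\in\fh_\ell$ with $\ell<\tau$, $G$ is too small to be a cover, so there exists $F_G\in\ff$ disjoint from $G$; by the intersecting property, every $F\in\ff$ with $F\supset G$ meets $F_G$, hence contains $G\cup\{i\}$ for some $i\in F_G$. Replacing each $G$ by the $\le k$ sets $G\cup\{i\}$, $i\in F_G$, gives $\fh_{\ell+1}$ with $|\fh_{\ell+1}|\le k|\fh_\ell|$, and after $\tau-1$ rounds one gets $\tau k^{\tau-1}$ sets of size $\tau$. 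The branching set $F_G$ is chosen \emph{adaptively} to be disjoint from the current $G$ --- that is the step your proposal lacks, and it is exactly what makes the covering number $\tau$ (rather than a covering number of a link, which is uncontrolled) limit the depth of the recursion.
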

\begin{proof}
For any $Y\subset [n]$, we use a standard notation $\ff(Y):=\{F\in \ff: Y\subset F\}$. Consider a cover $X$ of $\ff$ of size $\tau$. Define $\fh_1\subset {[n]\choose 1}$ as follows: $\fh_1:=\{\{i\}: i\in X\}.$ Then $\ff\subset \cup_{G\in \fh(1)}\ff(G)$ by the definition of a cover.

For each  $1\le \ell<\tau$, let us show how to construct $\fh_{\ell+1}$ from $\fh_\ell$. More precisely, assume that we have a family $\fh_\ell\subset {[n]\choose \ell}$ of at most $\tau k^{\ell-1}$ sets such that $\ff\subset \cup_{G\in \fh_{\ell}}\ff(G)$. For each set $G\in \fh_\ell$, consider a set $F_G\in \ff$ that is disjoint with $G$. Such set must exist since $|G|<\tau$. Put $\fh_{\ell+1}:=\{G\cup \{i\}: G\in \fh_{\ell}, i\in F_G\}$. It should be clear that $|\fh_{\ell+1}|\le \tau k^{\ell}$ and that $$\ff\subset \bigcup_{G'\in \fh_{\ell+1}}\ff(G').$$ 
Finally, we put $\fh:=\fh_\tau$.
\end{proof}

\begin{proof}[Proof of Theorem~\ref{thmmain1}]
Consider a collection $\ff_1,\ldots,\ff_m$ of intersecting families that cover ${[n]\choose k}$. Put $\tau_i := \tau(\ff_i)$ and let $\fh_i$ be a set-covering of $\ff_i$ from Lemma~\ref{covers_lemma}. Then the union $\fh_1\cup\ldots\cup\fh_m$ has no independent set of size $k$ and from Theorem~\ref{spencer} we have
$$
\sum_{i=1}^m \tau_i k^{\tau_i-1} p^{\tau_i} > np - k
$$
which is equivalent to 
$$
\sum_{i=1}^m \tau_i (kp)^{\tau_i-1} > n - k/p.
$$

Note that $x a^{x-1} < 2a$ for $x > 2$ and $a < 1/\sqrt{e}$, therefore the inequality above is implied by the following inequality:
$$
2m kp > n - k/p,
$$
provided that $kp<1/\sqrt e$. This is true for our choice of $n$ if we take $p = 2k/n$. Substituting this value of $p$ in the last displayed inequality, we get $4mk^2/n>n/2,$ which is equivalent to the statement of the theorem.
\end{proof}

\section{Proof of Theorem~\ref{thmmain3}}\label{sec4}

Put $m = \frac{n^2 - \eps nk^2}{2k(k-1)}$ and
assume that there are intersecting families $\ff_1, \ldots, \ff_m$ with $\tau(\ff_i) \ge 2$ such that $\ff_1\cup \ldots\cup \ff_m = \binom{[n]}{k}$.

The first step of the proof is to split each color $\ff_i$ into two parts such that each is easier to deal with. 
This is done using the following simple lemma.

\begin{lemma}\label{lempart}
Let $\fg$ be an intersecting family of $k$-sets with $\tau(\fg) \ge 2$. Then we can split it into $\fg'\sqcup\fg''$, where $\fg'$ can be set-covered by at most $k$ 2-edges and $\fg''$ cross-intersects some family of $t$-sets $\fg^\times$ with $t\in\{k-1,k\}$ and $\tau(\fg^\times) \ge \sqrt{k}$.
\end{lemma}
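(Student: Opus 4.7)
The plan is to condition on the size of $\tau(\fg)$, and in the hard case also on the covering numbers of the ``links'' $L_t := \{G\setminus\{t\}: G\in \fg(t)\}$ at elements $t$ of a minimum cover of $\fg$. If $\tau(\fg)\ge \sqrt k$, I simply take $\fg' := \emptyset$, $\fg'' := \fg$, and $\fg^\times := \fg$; the family $\fg$ is $k$-uniform and has the required covering number, and $\fg''$ cross-intersects $\fg^\times$ because $\fg$ is intersecting.

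Now suppose $\tau(\fg)<\sqrt k$, and fix a minimum cover $T$ of $\fg$, so that $|T|<\sqrt k$ and $\fg = \bigcup_{t\in T}\fg(t)$. The preliminary observation driving the remaining argument is the bound $\tau(L_t) \le k$ for every $t\in T$: since $\tau(\fg)\ge 2$, there is some $G_0\in\fg$ with $t\notin G_0$, and because $\fg$ is intersecting, $G_0$ meets every $G\in\fg(t)$ necessarily outside $t$, so $G_0$ itself serves as a $k$-element cover of $L_t$. With this in hand I split further.

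In Case A there exists $t_0\in T$ with $\tau(L_{t_0})\ge\sqrt k$. I take $\fg^\times := L_{t_0}$, $\fg' := \fg(t_0)$, and $\fg'' := \fg\setminus\fg(t_0)$. Then $\fg^\times$ is $(k-1)$-uniform with $\tau(\fg^\times)\ge\sqrt k$; any $G\in\fg''$ avoids $t_0$ but must meet every element of $\fg(t_0)$, which forces cross-intersection with $L_{t_0}$; and $\fg'$ is set-covered by the pairs $\{t_0,y\}$ with $y$ ranging over a minimum cover $C_{t_0}$ of $L_{t_0}$, and $|C_{t_0}|\le k$ by the preliminary bound.

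In Case B, $\tau(L_t)<\sqrt k$ for every $t\in T$. Picking a minimum cover $C_t$ of each $L_t$, the pairs $\{t,y\}$ with $t\in T$, $y\in C_t$ jointly set-cover all of $\fg$, and their total number is at most $|T|\cdot \max_t|C_t|<\sqrt k\cdot\sqrt k = k$. I put $\fg' := \fg$, $\fg'' := \emptyset$, and take $\fg^\times$ to be any family of $\lceil\sqrt k\rceil$ pairwise-disjoint $k$-subsets of $[n]$, which exists since $n\gg k^{3/2}$; cross-intersection with $\fg''$ is vacuous. The main (and essentially only) obstacle is the preliminary bound $\tau(L_t)\le k$: without the hypothesis $\tau(\fg)\ge 2$ it could fail, and then Case A might require more than $k$ pairs and the whole case analysis would collapse.
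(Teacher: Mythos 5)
Your argument is correct and follows essentially the same route as the paper: look at the links $L_t = \fg_i$ at the elements of a minimum cover, threshold their covering numbers at $\sqrt{k}$, and split off $\fg(t_0)$ at a link with large covering number. The only cosmetic differences are the exact case split (you test each $\tau(L_t)$ against $\sqrt{k}$, while the paper tests $\sum_i \tau(\fg_i)$ against $k$ and then extracts a large summand by pigeonhole) and that you explicitly exhibit a $\fg^{\times}$ in the degenerate case $\fg''=\varnothing$, where the paper leaves the condition vacuously satisfied.
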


\begin{proof}

If $\tau(\fg) \ge \sqrt{k}$, we can put $\fg'' := \fg$, since $\fg$ cross-intersects itself. So we will assume that $2 \le \tau(\fg) < \sqrt{k}$.

Put $\tau := \tau(\fg)$ and let $a_1, \ldots, a_\tau$ be a piercing set of $\fg$. Put $\fg_i = \{G\setminus\{a_i\} \colon G\in\fg, a_i\in G\}$.
Note that we can set-cover $\fg$ by $S := \sum_{i=1}^\tau \tau(\fg_i)$ edges: for each $i$ we draw edges from $a_i$ to vertices of a piercing set of $\fg_i$.

If $S \le k$ we can put $\fg' = \fg$ and $\fg'' = \varnothing$. Otherwise, for some $i$ we have, $\tau(\fg_i) > k / \tau > \sqrt{k}$.
Then we put $\fg'$ to be the family of all sets containing $a_i$, $\fg'' := \fg\setminus\fg'$ and $\fg^\times := \fg_i$.
\end{proof}

Using this lemma, we split $\ff_i = \ff'_i\sqcup\ff''_i$ for each $i\in[m]$.
Next, consider a ($2$-)graph $H$ on $[n]$ formed by at most $km$ edges that altogether set-cover $\ff'_1,\ldots, \ff'_m$ (the existence of such $H$ is guaranteed by the lemma). 
Since $mk < \frac{n(n-k+1)}{2(k-1)}$, by Tur\'an's theorem $H$ has at least one independent $k$-set.
Each independent $k$-set of $H$ should belong to some $\ff_i$, therefore it must be contained in the corresponding family $\ff_i''$.


The following theorem is the crux of the proof.
\begin{thm}\label{thmkey}
In the assumptions of Theorem~\ref{thmmain3}, let $G$ be a graph on $[n]$ with at most $mk$ edges and
let $\mathcal I(G)$ be the family of all $k$-sets in $[n]$ that are independent in $G$. Let $\ff\subset \mathcal I(G)$ be a family of independent $k$-sets in G, which cross-intersects some family $\fg$ of $t$-sets with $\tau(\fg) \ge \sqrt{k}$ and $t \le k$. Then $|\mathcal I(G)| > m\cdot |\ff|$.
\end{thm}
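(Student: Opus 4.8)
The plan is to bound $|\ff|$ from above by controlling, for each $F\in\ff$, how many ways it can arise, and simultaneously to produce many independent $k$-sets in $G$ that are \emph{not} in $\ff$. Since $\ff$ cross-intersects $\fg$ and $\tau(\fg)\ge\sqrt k$, every set $F\in\ff$ is a cover of $\fg$, hence $|F|=k$ forces the structure of $\fg$ to be spread out: no $\sqrt k$ elements pierce $\fg$, so in particular each $F\in\ff$ meets $\fg$ "transversally" and cannot be concentrated. The first step is to fix a set $G_0\in\fg$ (a $t$-set, $t\le k$) disjoint from as much as possible; more usefully, using $\tau(\fg)\ge\sqrt k$ I would extract from $\fg$ a "sunflower-free" or "spread" subfamily, or simply iterate the Lemma~\ref{covers_lemma}-type argument to find, for any small set $Y$, a member of $\fg$ avoiding $Y$. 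The point is that for every $F\in\ff$ and every element $x\in F$, there is $G\in\fg$ with $x\notin G$ but $G\cap F\ne\emptyset$ through some \emph{other} element; this rigidity is what we will trade against the count of independent sets.

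Next I would set up the injection/counting. For $F\in\ff$, consider swapping out an element of $F$ for an element of $[n]$ outside $F$: I want to show that a positive fraction of such one-element modifications $F' = (F\setminus\{x\})\cup\{y\}$ are still independent in $G$ (so $F'\in\mathcal I(G)$) but lie outside $\ff$, because $F'$ fails to cover $\fg$ — indeed, removing the "wrong" $x$ can destroy the covering property of $F$ against some $G\in\fg$, and then no single replacement $y$ can both restore it and keep independence, for most $y$. Counting: there are $n-k$ choices of $y$ and $k$ choices of $x$; the graph $G$ has only $nk$ edges, so the number of $y$ that create an edge with $F\setminus\{x\}$ is at most (average degree) $\cdot$ something, i.e. on the order of $k$ bad $y$'s per $x$ coming from edges, which is negligible compared to $n-k$ once $n\gg k^2$. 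So for each $x$ there are $\ge n - k - O(k\cdot\text{something})$ good replacements $y$, and a suitable choice of $x$ (dictated by the non-covering-of-$\fg$ condition, using $\tau(\fg)\ge\sqrt k$) guarantees $F'\notin\ff$. This yields $\gtrsim n$ distinct members of $\mathcal I(G)\setminus\ff$ attributable to each $F$; after dividing by the overcount (each such $F'$ is hit from at most $k$ sets $F$), one gets $|\mathcal I(G)\setminus\ff| \gtrsim (n/k)|\ff|$, which with room to spare gives $|\mathcal I(G)| = |\ff| + |\mathcal I(G)\setminus\ff| > n\,|\ff|$ provided $n\ge(2+\eps)k^2$ — here is where the precise constant $\eps=3/\ln k$ will be spent, balancing the $x a^{x-1}<2a$-type slack against the $\tau(\fg)\ge\sqrt k$ gain.

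I would carry the steps in this order: (i) record the consequence of $\ff$ cross-intersecting $\fg$ with $\tau(\fg)\ge\sqrt k$ — namely each $F\in\ff$ has an element whose removal breaks the cover, and more quantitatively that many elements of $F$ are "essential" for covering $\fg$; (ii) bound the number of $y\in[n]\setminus F$ that are adjacent in $G$ to some vertex of $F\setminus\{x\}$, using $e(G)\le nk$; (iii) for an essential $x$, show $(F\setminus\{x\})\cup\{y\}$ is independent and non-covering (hence outside $\ff$) for all but few $y$; (iv) assemble the double count, controlling multiplicity by at most $k$, and (v) optimize constants to close the inequality under $n\ge(2+\eps)k^2$.

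The main obstacle I anticipate is step (iii) combined with the multiplicity control in step (iv): after deleting an essential $x$ from $F$ and inserting $y$, I must certify that the new set $F'$ is \emph{not} in $\ff$, i.e. it is not cross-intersecting $\fg$ — but $y$ itself might accidentally cover the very set $G\in\fg$ that $F\setminus\{x\}$ missed, and worse, $F'$ might belong to $\ff$ via a different witness. Ruling this out uniformly requires either choosing $x$ more cleverly (so that \emph{many} sets of $\fg$ become uncovered, not just one, so that a random $y$ cannot patch all of them — here $\tau(\fg)\ge\sqrt k$ should give $\Omega(\sqrt k)$ independent obstructions), or restricting to a subfamily of $\ff$ on which a canonical essential element can be fixed, at the cost of a factor that must be absorbed into the $(2+\eps)k^2$ bound. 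Getting this bookkeeping tight enough to beat the factor $n$ (rather than merely $n/\mathrm{poly}(k)$) is the delicate point, and is presumably where the logarithmic $\eps$ enters.
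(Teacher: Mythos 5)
Your proposal takes a genuinely different route from the paper, but it has a gap that you yourself half-acknowledge at the end, and it is not a gap that tightening constants can fix.

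The core problem is the order of magnitude. A one-element swap produces, per $F\in\ff$, at most $n-k$ candidate sets $F'=(F\setminus\{x\})\cup\{y\}$ for a well-chosen $x$. After dividing by the multiplicity (each $F'$ can arise from up to $k$ parents $F$), the best you can hope for is $|\mathcal I(G)\setminus\ff|\gtrsim (n/k)\,|\ff|$ --- and your own text says exactly this. But $(n/k)|\ff|$ is smaller than $n|\ff|$ by a factor of $k$, so the displayed conclusion ``which with room to spare gives $|\mathcal I(G)| > n|\ff|$'' does not follow; you are not short on constants, you are short by a polynomial factor. No amount of spending $\eps=3/\ln k$ can bridge that. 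To get a factor of $n$ you would have to iterate the swap $\Theta(\sqrt k)$ or so times and show the gain compounds, which is essentially a different argument and which your proposal does not set up (after one swap, $F'$ is no longer in any structured family over which you control multiplicities).

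There is also a second, unaddressed gap earlier in the argument: you assert that every $F\in\ff$ has an ``essential'' element $x$ whose removal breaks the cover of $\fg$, and ``more quantitatively that many elements of $F$ are essential.'' Neither follows from the hypotheses. $F$ is a $k$-set that meets every $G\in\fg$, with $\tau(\fg)\ge\sqrt k$; that does not prevent every $G\in\fg$ from meeting $F$ in two or more elements, in which case no single deletion breaks the cover. And as you note, even if $F\setminus\{x\}$ misses some $G_0\in\fg$, the inserted element $y$ can repair the cover, or $F'$ can belong to $\ff$ for a reason unrelated to covering $\fg$ (membership in $\ff$ is not characterized by being a cover of $\fg$; $\ff$ is just assumed to cross-intersect $\fg$). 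So step (iii) is not a bookkeeping issue, it is an unproved claim.

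For comparison, the paper avoids both issues by running an Erd\H os--Lov\'asz-style branching for $\tau(\fg)\ge\sqrt k$ rounds. At round $\ell$ one has a partition of $\ff$ into pieces $\ff(A,B)$ with $|A|=\ell$, chooses $F_A\in\fg$ disjoint from $A$, and splits on which element of $F_A$ the sets hit. The key step is to lower-bound, at each round, the fraction of independent sets that fall into the ``miss $F_A$ entirely'' bucket --- which is discarded from $\ff$ but kept in $\mathcal I(G)$. This is done not by swapping but by the Khad\v ziivanov--Nikiforov (Reiher) clique-density inequality applied to the complement graph, giving a per-round gain factor $1+c_\ell$ with $c_\ell=e^{-1/\eps}\approx k^{-1/3}$. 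Compounded over $\sqrt k$ rounds this gives $(1+k^{-1/3})^{\sqrt k}\ge e^{k^{1/6}-o(1)}$, which beats $n<k^3$ with an enormous margin. That compounding is precisely the multiplicative leverage your single swap lacks. If you want to salvage your approach, you would need to (a) replace the ``essential element'' heuristic with the concrete decomposition along a fixed $F_A\in\fg$ disjoint from the current prefix, and (b) iterate, tracking how the independent-set count grows relative to $|\ff|$ round by round; at that point you are rederiving the paper's argument, and the clique-density theorem is what makes the per-round estimate work in the presence of up to $nk$ edges.
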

We prove this theorem in a separate subsection. Using this result, it is straightforward to finish the proof of Theorem~\ref{thmmain3}. Indeed, the $k$-sets that are not set-covered by $H$ ($\mathcal I(G)$ in the notation of Theorem~\ref{thmkey}) must be contained in the union of $\ff''_i,$ $i\in [m].$ However by Lemma~\ref{lempart}, each $\ff''_i$ cross-intersects a family of $t$-sets $\fg_i$, where $t\le k$, with $\tau(\fg_i)\ge \sqrt k$. Theorem~\ref{thmkey} then guarantees that $\mathcal I(H)$ satisfies $|\mathcal I(H)|>m|\ff''_i|,$ and thus $\mathcal I(H)$ cannot be covered by the union of $\ff''_i$, a contradiction.

\subsection{Proof of Theorem~\ref{thmkey}}

We are going to use the following result by Khad\v ziivanov and Nikiforov \cite{KhN} (see \cite{R} for a reformulation and a proof in English):
\begin{thm}
For a given graph $G$ let $\gamma$ be the density $\frac{|E(G)|}{|V(G)|^2}$ and $N_r(G)$ be the number of cliques on $r$ vertices, $r \le |V(G)|$. Then, if $\gamma \ge \frac{r-2}{2(r-1)}$, we have
\begin{equation}
N_r(G) \ge \frac{2(r-1)\gamma - (r-2)}{r} \cdot |V(G)| \cdot N_{r-1}(G)
\mbox{~~~and~~~}
N_{r-1}(G) > 0.
\label{reiher_eq}
\end{equation}
\label{reiher}
\end{thm}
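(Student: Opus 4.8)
The plan is to prove the inequality by induction on $r$. Throughout write $n=|V(G)|$ and $e=|E(G)|$, so $\gamma=e/n^{2}$. It helps to record what the statement says once one divides the conclusion by $N_{r-1}(G)/r$ (legitimate as soon as $N_{r-1}(G)>0$): setting $\mu_{j}:=jN_{j}(G)/N_{j-1}(G)$, so that $\mu_{1}=n$ and $\mu_{2}=2e/n=2\gamma n$, the claim becomes $\mu_{r}\ge(r-1)\mu_{2}-(r-2)\mu_{1}$, i.e.\ the sequence $\mu_{1},\mu_{2},\dots$ stays on or above the arithmetic progression through its first two terms — a convexity-type statement along the clique chain. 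The base case $r=2$ is the identity $N_{2}=e$. The case $r=3$ reads $3N_{3}\ge e(4e-n^{2})/n$ and is the Moon--Moser/Kruskal--Katona bound, obtained from $|N(u)\cap N(v)|\ge d_{u}+d_{v}-n$ summed over all edges $uv$ together with $\sum_{v}d_{v}^{2}\ge(2e)^{2}/n$; this is also where the crude inclusion--exclusion estimate suffices.

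The positivity conclusion $N_{r-1}(G)>0$ comes essentially for free: since $j\mapsto\frac{j-2}{2(j-1)}$ is strictly increasing, the hypothesis gives $\gamma\ge\frac{r-2}{2(r-1)}>\frac{r-3}{2(r-2)}$, so the theorem for parameter $r-1$ (available by induction) yields $N_{r-2}(G)>0$ and $N_{r-1}(G)\ge\frac{2(r-2)\gamma-(r-3)}{r-1}\,nN_{r-2}(G)$ with a strictly positive coefficient.

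For the inductive step take $r\ge4$. I would use the link-graph identities $\sum_{v}N_{j}(G[N(v)])=(j+1)N_{j+1}(G)$ (each $(j+1)$-clique is counted once per vertex), in particular for $j=r-1$ and $j=r-2$. Each link $G[N(v)]$ has $d_{v}$ vertices and $e_{v}:=|E(G[N(v)])|$ edges, density $\gamma_{v}=e_{v}/d_{v}^{2}$; the theorem with parameter $r-1$ applied to it gives $N_{r-1}(G[N(v)])\ge\frac{2(r-2)\gamma_{v}-(r-3)}{r-1}d_{v}N_{r-2}(G[N(v)])$ whenever $\gamma_{v}\ge\frac{r-3}{2(r-2)}$ — and in fact for \emph{every} $v$, since if $\gamma_{v}$ is below this threshold the coefficient $2(r-2)\gamma_{v}-(r-3)$ is negative, making the right-hand side non-positive while the left-hand side is non-negative. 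Summing over $v$, writing $\gamma_{v}d_{v}=e_{v}/d_{v}$ and $w_{v}:=N_{r-2}(G[N(v)])$ (so $\sum_{v}w_{v}=(r-1)N_{r-1}(G)$), one obtains
$$rN_{r}(G)\ \ge\ \frac{1}{r-1}\sum_{v}\Bigl(\tfrac{2(r-2)e_{v}}{d_{v}}-(r-3)d_{v}\Bigr)w_{v}.$$
Since $\sum_{v}w_{v}=(r-1)N_{r-1}(G)$, the target $rN_{r}(G)\ge(2(r-1)\gamma-(r-2))nN_{r-1}(G)$ reduces to the single inequality
$$\sum_{v}\Bigl(\tfrac{2(r-2)e_{v}}{d_{v}}-(r-3)d_{v}-(2(r-1)\gamma-(r-2))n\Bigr)w_{v}\ \ge\ 0.\qquad(\ast)$$

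Inequality $(\ast)$ is the crux, and the only place the lower bound on $\gamma$ is used; I expect it to be the main obstacle. A single application of Cauchy--Schwarz — for instance $\sum_{v}e_{v}^{2}/d_{v}\ge(\sum_{v}e_{v})^{2}/\sum_{v}d_{v}$ combined with the crude bound $\sum_{v}d_{v}w_{v}\le n\sum_{v}w_{v}$ — is already insufficient at $r=4$: it would demand the false inequality $e\ge n^{2}/2$. What makes $(\ast)$ true is that the Tur\'an-type threshold $\gamma\ge\frac{r-2}{2(r-1)}$, i.e.\ $\sum_{v}d_{v}\ge\frac{r-2}{r-1}n^{2}$, forces a positive correlation between the local density $e_{v}/d_{v}^{2}$ and the local clique count $w_{v}$ — a vertex with a sparse link carries little weight in $\sum_{v}w_{v}=(r-1)N_{r-1}(G)$ — so the two sums $\sum_{v}(e_{v}/d_{v})w_{v}$ and $\sum_{v}d_{v}w_{v}$ must be handled jointly by a second-moment argument that keeps track of this correlation, rather than estimated independently. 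In the convexity language of the first paragraph, $(\ast)$ together with the link bound delivers the full conclusion $\mu_{r}\ge(r-1)\mu_{2}-(r-2)\mu_{1}$, and its $r=3$ avatar is exactly the Moon--Moser inequality of the base case — which is why the global inclusion--exclusion bound $|N(u)\cap N(v)|\ge d_{u}+d_{v}-n$ that closes the argument for triangles must, for $r\ge4$, be iterated through the links as above instead of applied directly.
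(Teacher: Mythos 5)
First, note that the paper does not prove this statement at all: it is imported verbatim from Khad\v ziivanov--Nikiforov \cite{KhN} (with \cite{R} cited for an English proof), so there is no internal argument to compare yours against; the only question is whether your proposal is a complete proof in its own right.

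It is not, and you say so yourself: everything is reduced to the inequality $(\ast)$, which is exactly where the hypothesis $\gamma\ge\frac{r-2}{2(r-1)}$ must enter, and at that point you offer only a heuristic (``positive correlation between local density and local clique count, handled by a second-moment argument'') rather than an argument. That step is not a technicality --- for $r\ge 4$ it carries essentially the whole content of the theorem, since the link-graph identity $\sum_v N_{j}(G[N(v)])=(j+1)N_{j+1}(G)$ and the base case $r=3$ (your Moon--Moser computation, which is correct) are standard, and you have correctly observed that the naive Cauchy--Schwarz estimate, $\sum_v e_v^2/d_v\ge(\sum_v e_v)^2/\sum_v d_v$ together with $\sum_v d_v w_v\le n\sum_v w_v$, already fails at $r=4$. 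Moreover it is not evident that the vertex-by-vertex application of the induction hypothesis to the links is a lossless reduction: the weights $w_v=N_{r-2}(G[N(v)])$ are not controlled by the global quantities $e_v,d_v$ in any way you exploit, so $(\ast)$ might require information your summation has already discarded. The known proofs (Khad\v ziivanov--Nikiforov, and the reformulation in Reiher's clique density paper) do not proceed by summing the induction hypothesis over vertex links; they run the induction through weighted counting over $(r-1)$-cliques and their common neighbourhoods (a Moon--Moser/Zykov-type double counting combined with a convexity or Cauchy--Schwarz step at the clique level), which is precisely what supplies the coupling between the two sums that your sketch identifies as missing. As it stands, then, the proposal correctly frames the statement (the convexity reformulation $\mu_r\ge(r-1)\mu_2-(r-2)\mu_1$ is a nice way to see it, and the derivation of $N_{r-1}>0$ from monotonicity of $\frac{r-2}{2(r-1)}$ is fine), but it has a genuine gap at its central inequality and would need to be restructured around a clique-level counting argument, or else supplemented with an actual proof of $(\ast)$, to become a proof.
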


Note that if the inequality on $\gamma$ in  Theorem~\ref{reiher}  holds for some $r$, then it holds for smaller $r \ge 2$ as well. That is, we can apply $\eqref{reiher_eq}$ several times to compare $N_r(G)$ with $N_{r'}(G)$ for $r > r' \ge 2$.

The idea behind the proof is as follows. Similarly to the proof of Lemma~\ref{covers_lemma} we inductively construct a family that set-covers $\ff$ of larger and larger uniformity using sets from $\fg$. (These set-covers are encoded in the families $\mathcal H_\ell$ from the proof. We have to be more careful with these set-covers than in Lemma~\ref{covers_lemma}, in order to ensure disjointness of certain families.) At each step of the procedure, indexed by the uniformity of $\mathcal H_\ell$, we consider the collection of independent sets that is set-covered by $\mathcal H_\ell$, and bound from below the proportion of these sets that would be ``missed out'' by the set-cover $\mathcal H_{\ell+1}$ of size $1$ larger. We use the result of Khad\v ziivanov and Nikiforov for this bound.

\begin{proof}[Proof of Theorem~\ref{thmkey}]
Let $\I$ be the family of all independent $k$-sets in $G$. Put $\tau := \tau(\fg)$. For $A\subset [n]$, $|A| < \tau$, fix a set $F_A\in\fg$ which is disjoint with $A$. For $A \subset B \subset [n]$ and a family $\mathcal W\subset 2^{[n]}$, denote $$\mathcal W(A, B) := \{F \setminus A\colon F\in\mathcal W, F\cap B = A\}.$$

Put $N_\ell = t^\ell$. First, we are going to construct a sequence of families $\fh_\ell$ of pairs $(A_i, B_i)$, $i=1,\ldots, N_\ell$, such that $\ff = \bigsqcup_{i=1}^{N_\ell} \ff(A_i, B_i)$, $\I \supset \bigsqcup_{i=1}^{N_\ell} \I(A_i, B_i)$, $|A_i| = \ell$, $|B_i| \le \ell k$. (Disjointness is crucial here.)
\vskip+0.2cm

{\bf Construction of $\mathcal H_\ell$.}
First, we put $\fh_0 := \{(\varnothing, \varnothing)\}$.

For each  $0\le \ell < \sqrt{k} \le\tau$, let us show how to construct $\fh_{\ell+1}$ from $\fh_\ell$. We have $\fh_\ell := \{(A_1, B_1), \ldots, (A_{N_\ell}, B_{N_\ell})\}$ such that $|A_i| = \ell$,
$$
\ff = \bigsqcup_{i=1}^{N_\ell} \ff(A_i, B_i)
\mbox{~~~and~~~}
\I \supset \bigsqcup_{i=1}^{N_\ell} \I(A_i, B_i).
$$

For each pair $(A, B) \in \fh_\ell$ put $\{f_1, \ldots, f_t\} := F_A$. Since each set in $\ff$ intersects $F_A$, we can decompose
$$
\ff(A, B) = \bigsqcup_{j=1}^t \ff\big(A\cup\{f_j\}, B \cup \{f_1, \ldots, f_j\}\big)
$$
and 
\begin{equation}
\I(A, B) = \I(A, B \cup F_A) \sqcup \bigsqcup_{j=1}^t \I(A\cup\{f_j\}, B \cup \{f_1, \ldots, f_j\})
\label{indep_decomp}
\end{equation}

Then we put
$$\fh_{\ell+1} := \bigcup_{(A,B)\in \fh_\ell}\Big\{ \big(A\cup \{f_j\},B\cup \{f_1,\ldots, f_j\}\big)\colon j=1,\ldots, t, \  \text{where}\ \{f_1,\ldots, f_t\}=F_A \Big\}.$$
This completes the construction of $\mathcal H_\ell.$\\

Put $\I_\ell := \bigsqcup_{(A, B)\in\fh_\ell} \I(A, B)$. Note that, since $\ff(A, B) \subset \I(A, B)$, we have $\ff \subset \I_\ell$ for any $\ell\le \tau$.

Let $c_\ell \ge 0$ be such that for any $(A, B)\in\fh_\ell$,  $\{f_1,\ldots,f_t\} = F_A$, we have
\begin{equation}\label{eqcl}
\I(A, B\cup F_A) \ge
c_\ell \cdot \sum_{j=1}^t \big|\I(A\cup\{f_j\}, B\cup\{f_1,\ldots,f_j\})\big|.
\end{equation}
Note that by the definition of $c_\ell$ and \eqref{indep_decomp} we have
$$
|\I_\ell| \ge (1 + c_\ell)\cdot|\I_{\ell + 1}|
$$
for any $0\le \ell < \sqrt{k} \le \tau$ and, therefore,
\begin{equation}
|\I| = |\I_0| \ge |\I_\tau|\cdot \prod_{\ell=0}^{\sqrt{k}-1} (1 + c_\ell) \ge
|\ff|\cdot \prod_{\ell=0}^{\sqrt{k}-1} (1 + c_\ell).
\label{c_product}
\end{equation}

\medskip

Now we are going to show that we can put $c_\ell := e^{-1/\eps}$ in \eqref{eqcl}.

Consider a pair $(A, B)$ from $\fh_\ell$ and put $\{f_1, \ldots, f_t\} = F := F_A$. For convenience, put $\I' := \I(A, B)$. We can decompose
$$
\bigsqcup_{j=1}^t \I'(\{f_j\}, \{f_1, \ldots, f_j\}) = \bigsqcup_{\varnothing\ne A'\subset F} \I'(A', F).
$$

Let us compare |$\I'(\varnothing, F)$| and $|\I'(A', F)|$, for $A' \subset F$, $A' \ne \varnothing$ using Theorem~\ref{reiher}.

Put $X:=[n]\setminus (B\cup F)$. We are going to apply Theorem~\ref{reiher} on the graph $\overline G[X]$, i.e., the complement of $G$ induced on $X$. Put $\gamma = \frac12 - \frac{1}{2|X|} - \rho$, where $\rho$ is the density of edges in $G[X]$. Note that, first, $\gamma$ is exactly the density of $\overline G[X]$ and, second, $\frac{1}{2|X|} + \rho \le \frac{|X| / 2 + |e(G)|}{|X|^2} \le \frac{n/2 + mk}{|X|^2}$.

Recall that $\I'(\varnothing, F)$ is the family of independent $(k-\ell)$-sets in $G[X]$. Note that each set from $\I'(A',F)$ is the intersection (of size $k-\ell-|A'|$) of a $k$-element independent set in $G$ with $X$, and thus $|\I'(A', F)|$ is bounded from above by the number of independent $(k-\ell-|A|)$-sets in $G[X]$. Applying \eqref{reiher_eq}, we have
$$
|\I'(\varnothing, F)| \ge 
|\I'(A', F)| \cdot |X|^{|A'|} \prod_{i=0}^{|A'|-1} \frac{2(k-\ell-i-1)\gamma - (k-\ell-i-2)}{k-\ell-i} = 
$$
$$
=
|\I'(A', F)| \cdot |X|^{|A'|} \prod_{i=0}^{|A'|-1} \frac{1 - 2(k-\ell-i-1)(\rho + 1/2|X|)}{k-\ell-i}.
$$

\begin{cla}
We have $1 - 2(k-\ell-i-1)(\rho + 1/2|X|) \ge \frac{\eps k^2}{2|X|}$.
\end{cla}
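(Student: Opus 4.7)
The plan is a two-step reduction: first bound $\rho$ using the given edge count and a lower bound on $|X|$, then verify an algebraic inequality using the hypothesis $n \ge (2+\eps)k^2$. For the first step, from the construction of $\fh_\ell$ we have $|B| \le \ell k$ and $|F| = t \le k$, giving $|X| = n - |B\cup F| \ge n - (\ell+1)k$. Since $\ell + 1 \le \tau \le t \le k$, we have $(\ell+1)k \le k^2$, so $|X|$ is positive. Because $|e(G[X])| \le |e(G)| \le nk$, this yields
$$\rho \le \frac{nk}{(n - (\ell+1)k)^2}.$$

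Next, using the crude bound $k - \ell - i - 2 \le k - \ell - 2$ (and noting that the claim is vacuous when the left side is negative, i.e., when $i = k-\ell-1$), it suffices to prove
$$(k - \ell - 2)\cdot\frac{nk}{(n-(\ell+1)k)^2} \le \frac{k^2}{n},$$
i.e., after clearing denominators, $(k - \ell - 2)\, n^2 \le k\,(n - (\ell+1) k)^2$.

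Finally, I would expand the right-hand side and regroup, reducing the target inequality to $(\ell + 2) n^2 - 2 n k^2 (\ell + 1) + k^3 (\ell + 1)^2 \ge 0$. Applying $n \ge 2 k^2$ to the leading term gives $(\ell + 2) n^2 \ge 2 n k^2 (\ell + 2) = 2 n k^2 (\ell + 1) + 2 n k^2$, which absorbs the middle term and leaves a nonnegative remainder $2 n k^2 + k^3 (\ell + 1)^2 \ge 0$. There is no real obstacle; the main subtlety is that one must keep the sharper bound $|X| \ge n - (\ell+1)k$ rather than the looser $|X| \ge n - k^2$, so that the tradeoff with the factor $k - \ell - 2$ works out tightly at $n = 2k^2$ (the hypothesis $n \ge (2+\eps)k^2$ even leaves slack).
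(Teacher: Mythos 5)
Your proof is correct and follows essentially the same route as the paper: you bound $\rho$ by $nk/(n-(\ell+1)k)^2$ exactly as the paper does, and then use the hypothesis $n\ge 2k^2$ (which follows from $n\ge(2+\eps)k^2$) to close the algebra. The paper presents the final step as a chain of fraction manipulations ending with $\frac{k^2}{n}\cdot\frac{1-(\ell+i+2)/k}{1-(\ell+1)/k}\le\frac{k^2}{n}$, whereas you expand $(n-(\ell+1)k)^2$ and compare coefficients directly; this is a cosmetic difference, not a different argument, and your version arguably handles the boundary case $\ell+1=k$ more cleanly since it avoids any division.
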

\begin{proof}
Indeed,
$$
1 - 2(k-\ell-i-1)(\rho + 1/2|X|) \ge
\frac{(n - k(\ell+1))^2 - 2(n/2 + mk)(k-\ell-i-1)}{|X|^2} \ge
$$
$$
\frac{n^2 - O(nk^{3/2}) - 2mk(k-1)}{n|X|} \ge
\frac{\eps n k^2 / 2}{n|X|} =
\frac{\eps k^2}{2|X|}.
$$
\end{proof}

Then we have
$$
|\I'(\varnothing, F)| \ge 
|\I'(A', F)| \cdot |X|^{|A'|} \prod_{i=0}^{|A'|-1} \frac{\eps k^2}{2|X|(k-\ell-i)} \ge 
|\I'(A', F)| \cdot ( \eps k / 2 )^{|A'|}
$$

Recall that $|F| = t\le k$. Rewriting, we get

$$
\sum_{\varnothing\ne A'\subset F} \frac{|\I'(A', F)|}{|\I'(\varnothing, F)|} \le
\sum_{\varnothing\ne A'\subset F} (\eps k/2)^{-|A'|} =
$$
$$
\le 
\sum_{a=1}^{t} \binom{t}{a} (\eps k/2)^{-a} =
(1 + 2 / (\eps k))^t - 1
\le e^{2/\eps}.
$$
We used $t\le k$ and $1+x\le e^x$ in the last inequality. So we can put $c_\ell := e^{-2/\eps}$ in \eqref{eqcl}.

\bigskip

Finally, from \eqref{c_product}, using that $1 + x \ge e^{x-x^2}$ for $|x| \le 1/2$, we have
$$
|\I| \ge |\ff|\cdot \prod_{\ell=0}^{\sqrt{k}-1} (1 + c_\ell) =
|\ff|\cdot (1 + e^{-2/\eps})^{\sqrt{k}} \ge
|\ff|\cdot \exp\big(\sqrt{k}\cdot (e^{-2/\eps} - e^{-4/\eps})\big) =
$$
$$
|\ff|\cdot \exp\big(\sqrt{k}\cdot (k^{-0.4} - k^{-0.8})\big) \ge
|\ff|\cdot m,
$$
where the last inequality holds for $k$ large enough and $m < e^{k^{0.1} / 2}$. This completes the proof.
\end{proof}

\begin{small}

\end{small}

\end{document}